\documentclass[11pt,a4paper,reqno]{amsart}

\usepackage{a4wide,color}

\usepackage[english]{babel}
\usepackage[utf8]{inputenc}

\usepackage{bbm,amsmath,amsthm,epsfig,latexsym,marvosym}
\usepackage{amsfonts}
\usepackage{bigints}
\usepackage{amsmath}
\usepackage{amssymb}
\usepackage{subfig} 
\usepackage{microtype}
\usepackage{enumitem}
\allowdisplaybreaks
\usepackage{esint,xfrac}
\usepackage[colorlinks=true,linkcolor=blue,citecolor=red]{hyperref}
\usepackage[capitalise]{cleveref}
\newcommand{\R}{\mathbb{R}}
\newcommand{\C}{\mathbb{C}}
\newcommand{\Ld}{\mathcal{L}^{d}}

\newcommand{\A}{\mathcal{A}}
\newcommand{\aA}{\mathbb{A}}

\newcommand{\ap}{\mathrm{ap}}
\newcommand{\supp}{\operatorname{spt}}
\newcommand{\dist}{\operatorname{dist}}
\newcommand{\eps}{\varepsilon}

\newcommand{\BV}{\mathrm{BV}}
\newcommand{\BD}{\mathrm{BD}}
\newcommand{\LL}{\mathrm{L}}
\def\ca{\mathbbmss{1}}

\newtheorem{theorem}{Theorem}[section]
\newtheorem{proposition}[theorem]{Proposition}
\newtheorem{lemma}[theorem]{Lemma}

\newtheorem{remark}[theorem]{Remark}
\newtheorem{definition}[theorem]{Definition}

\title{From \(\BV^\mathcal A\) to \(\BV\): An endpoint Korn estimate}

\author{Marco Caroccia}
\address{DIpartimento di MAtematica e Informatica \textit{Ulisse Dini}}
\email{marco.caroccia@unifi.it}

\makeindex
\begin{document}
\maketitle

\begin{abstract}
In this short note we prove that, given a $\C$-elliptic operator $\A$ and a map
$u\in \BV^{\mathcal{A}}(\Omega;V)$ satisfying
\(
\nabla_{\mathrm{ap}}u\in
L^1(\Omega;V\otimes\R^d),
\)
then $u\in \BV(\Omega;V)$. The result is quantitative and follows from
the Korn-type estimate
\[
|Du|(\Omega)
\leq
C_{d,\A}\left(
\|\nabla_{\mathrm{ap}}u\|_{\mathrm{L}^1(\Omega)}
+
|\A u|(\Omega)
\right),
\]
valid for every such $u\in \BV^\mathcal{A}(\Omega;V)$. As a consequence, we obtain the characterization
\[
\BV^\A(\Omega;V)\setminus \BV(\Omega;V)
=
\left\{
u\in \BV^\A(\Omega;V):
\nabla_{\mathrm{ap}}u\notin
L^1(\Omega;V\otimes\R^d)
\right\}.
\]
Generative AI has been exploited. The usage is detailed in a specific Section.  
% Let $V,W$ be finite-dimensional Euclidean spaces and let
% \[
%  \A u:=\sum_{j=1}^d A_j\partial_j u,
%  \qquad A_j\in\operatorname{Lin}(V,W),
% \]
% be a first-order homogeneous differential operator with constant coefficients. We assume that $\A$ is $\C$-elliptic, equivalently that its distributional null-space is finite-dimensional. We prove the following endpoint closure principle: if $u\in \BV^{\A}(\Omega)$ and the approximate gradient satisfies $\nabla_{\ap}u\in \mathrm{L}^1(\Omega;V\otimes\R^d)$, then $u\in \BV(\Omega;V)$ and
% \[
%  |Du|(\Omega)
%  \le C_{d,\A}\Bigl(|\A u|(\Omega)
%  +\|\nabla_{\ap}u\|_{\mathrm{L}^1(\Omega)}\Bigr).
% \]
% No regularity assumption on the boundary of $\Omega$ is required. The proof combines the Poincare inequality modulo the finite-dimensional null-space of $\A$, a weak-$(1,1)$ Calderon-Zygmund estimate for the full approximate gradient of compactly supported $\BV^{\A}$ maps, and a finite-dimensional anchoring argument for null-space polynomials. A fixed-scale grid then gives a uniform $\mathrm{L}^1$ bound for the gradients of standard mollifications. We also record that the full-gradient formulation is quantitatively equivalent to controlling only the component of $\nabla_{\ap}u$ lying in the kernel of the linear map induced by $\A$ on first derivatives.
\end{abstract}
\tableofcontents
\section{Introduction}

The classical Korn inequality for $1<p<\infty$ controls the full gradient of a displacement, up to a constant skew-symmetric matrix, by its
symmetric gradient. However, at the endpoint $p=1$, due to Ornstein's non-inequality \cite{ornstein1962non}, this type of estimate generally fails. Several results are available that recover useful Korn-type estimates for $p=1$, for instance by removing a controlled exceptional set or by allowing different rigid motions on different pieces of the domain,
as in \cite{friedrich2018piecewise,cagnetti2022korn}.
In this paper we propose a Korn-type estimate at the endpoint $p=1$, valid without removing any exceptional set, but at the price of including the $L^1$ norm of the full approximate gradient. In particular, this gives a simple characterization of those $\BD$ maps which fail to belong to $\BV$:
they are precisely the ones whose full approximate gradient is not integrable.
Since the mechanism behind the argument is not specific to the symmetric
gradient, we work in the more general setting of functions of bounded
$\mathcal A$-variation, for first-order $\C$-elliptic operators (see \cref{sbsct:Avar}).\\

Let
\[
 \A u=\sum_{j=1}^d A_j\partial_j u
\]
be a first-order homogeneous differential operator with constant
coefficients. Assuming that $\A$ is $\C$-elliptic, we consider the space
$\BV^{\mathcal A}$ of functions whose $\A$-gradient is a finite Radon measure,
see \cite{breit2017traces} or \cref{sbsct:Avar} below. Moreover, by \cite[Lemma~3.1]{GR2019diff} (see also \cref{thm:criticaldiff} below), every
$u\in\BV^{\mathcal A}$ admits an approximate gradient
$\nabla_{\mathrm{ap}}u$ almost everywhere. Our main theorem then reads as follows.
\begin{theorem}\label{thm:main}
Let $\A$ be a $\C$-elliptic first-order homogeneous differential operator.
Then there exists $C_{d,\A}>0$ such that, for every open set
$\Omega\subset\R^d$ and every $u\in\BV^\A(\Omega)$ satisfying
\[
 \nabla_{\ap}u\in L^1(\Omega;V\otimes\R^d),
\]
one has $u\in\BV(\Omega;V)$ and
\[
 |Du|(\Omega)
 \le C_{d,\A}\left(
 |\A u|(\Omega)+\int_\Omega|\nabla_{\ap}u|\,dx
 \right).
\]
\end{theorem}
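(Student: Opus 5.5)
The plan is to prove a uniform bound on $\int|\nabla u_\eps|$ for the standard mollifications $u_\eps=u*\rho_\eps$ on compactly contained subdomains, and then pass to the limit. Fix $\Omega'\Subset\Omega$ and a scale $\eps>0$ with $4\sqrt d\,\eps<\dist(\Omega',\partial\Omega)$. Cover $\Omega'$ by the grid of closed cubes $Q$ of side $\eps$, and let $Q^*$ be the concentric cube of side $4\eps$, so that $Q^*\subset\Omega$ and the family $\{Q^*\}$ has overlap bounded by $c_d$. If for every such $Q$ we prove
\[
\int_Q|\nabla u_\eps|\,dx\le C_{d,\A}\Bigl(|\A u|(Q^*)+\int_{Q^*}|\nabla_{\ap}u|\,dx\Bigr),
\]
then summing over the grid gives $\int_{\Omega'}|\nabla u_\eps|\le C(|\A u|(\Omega)+\|\nabla_{\ap}u\|_{L^1(\Omega)})$ uniformly in $\eps$. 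Since $u_\eps\to u$ in $L^1_{loc}$, lower semicontinuity of the total variation yields $|Du|(\Omega')$ bounded by the same quantity. Exhausting $\Omega$ by such $\Omega'$ then gives $u\in\BV(\Omega;V)$ with the stated estimate, and no boundary regularity is needed.

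\textbf{Local estimate.} Let $P\in\ker\A$, a finite-dimensional space of polynomials by $\C$-ellipticity, be a minimiser in the Poincaré inequality on $Q^*$:
\[
\|u-P\|_{L^1(Q^*)}\le C\eps\,|\A u|(Q^*).
\]
On $Q$ we write $\nabla u_\eps=\nabla\rho_\eps*(u-P)+\nabla(P*\rho_\eps)$. Since $\|\nabla\rho_\eps\|_{L^1}\lesssim\eps^{-1}$, the first term has $L^1(Q)$ norm at most $C\eps^{-1}\|u-P\|_{L^1(Q^*)}\le C|\A u|(Q^*)$. Here the fixed-scale choice, grid size equal to mollification scale, is exactly what makes the factors $\eps$ cancel. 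It therefore remains to control $\eps^d\sup_{Q^*}|\nabla P|$.

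\textbf{Anchoring $\nabla P$.} This is the main obstacle. Take a cutoff $\varphi$ equal to $1$ on the middle cube $2Q$ and supported in $Q^*$, with $|\nabla\varphi|\lesssim\eps^{-1}$. Set $w=\varphi(u-P)$, which is a compactly supported $\BV^\A$ map with
\[
|\A w|(\R^d)\le |\A u|(Q^*)+C\eps^{-1}\|u-P\|_{L^1(Q^*)}\le C|\A u|(Q^*).
\]
We then invoke the weak-$(1,1)$ Calderón--Zygmund estimate for the full approximate gradient of compactly supported $\BV^\A$ maps, namely $\mathcal L^d(\{|\nabla_{\ap}w|>\lambda\})\le C\lambda^{-1}|\A w|(\R^d)$. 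This step is the one I expect to require the real work: via the representation $w=K*\A w$ with a kernel homogeneous of degree $1-d$, the derivative $\nabla K$ is a Calderón--Zygmund kernel, and one must identify $\nabla_{\ap}w$ a.e. with the principal-value part of $\nabla K*\A w$, using \cref{thm:criticaldiff}. Choosing $\lambda=C'|\A u|(Q^*)/|Q|$ with $C'$ large, the set $G=\{x\in 2Q:\ |\nabla_{\ap}u(x)-\nabla P(x)|\le\lambda\}$ has $|G|\ge\tfrac12|2Q|$, because $\nabla_{\ap}w=\nabla_{\ap}u-\nabla P$ a.e. on $2Q$.

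To conclude, we use that $\nabla(\ker\A)$ is finite-dimensional and that, by rescaling to the unit cube, $\sup_{Q^*}|\nabla Q_0|\le C\,\frac{1}{|G|}\int_G|\nabla Q_0|$ holds for every $Q_0\in\ker\A$ and every $G\subset 2Q$ with $|G|\ge\frac12|2Q|$. The uniformity over such $G$ follows by a compactness argument, since norms on a finite-dimensional space are equivalent and polynomials cannot concentrate on null sets. This gives
\[
\eps^d\sup_{Q^*}|\nabla P|\le C\int_G|\nabla P|\le C\Bigl(\int_{Q^*}|\nabla_{\ap}u|+\lambda|Q|\Bigr)\le C\Bigl(\int_{Q^*}|\nabla_{\ap}u|+|\A u|(Q^*)\Bigr),
\]
which, combined with the bound on the first term, is the local estimate.
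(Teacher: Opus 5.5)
Your proposal is correct and follows essentially the paper's proof: a grid and mollification at the same scale $\eps$, the Poincar\'e projection onto $\mathrm{Ker}(\A)$, the weak-$(1,1)$ estimate applied to the cut-off $w=\varphi(u-P)$, a finite-dimensional upgrade to $\mathrm{L}^1$ control of $\nabla P$, then summation, lower semicontinuity and exhaustion. The only differences are cosmetic. You work on cubes rather than balls; the cited Poincar\'e inequality is stated for balls, but applying it on the ball circumscribing $Q^*$ changes nothing. You also replace the paper's equivalence of the $\mathrm{L}^1$ and $\mathrm{L}^{1,\infty}$ quasi-norms on $\nabla\mathrm{Ker}(\A)$ by an equivalent Remez-type bound over a good set of at least half measure.
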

We emphasize that the constant $C_{d,\A}$ is independent of $\Omega$ and that the above does not require any regularity assumption on $\partial \Omega$. For the sake of a cleaner statement, we formulate the result in terms of
the full approximate gradient. The essential quantity is however only its component in the kernel of the linear map induced by $\A$ on first derivatives, the complementary component being already controlled by $\A u$.\\

The model case is $\A=\mathcal E$, the symmetric gradient. The gap between
$\BD$ and $\BV$ is closely related to the skew-symmetric part of the
approximate gradient, which is not seen by $\mathcal E u$. This is already visible
in the examples constructed in \cite{conti2018special}: they produce
functions in $\mathrm{SBD}$ which fail to belong to $\mathrm{GBV}$ by
building increasingly large infinitesimal rotations while keeping the
symmetric part of the gradient under control. The natural question is then whether the failure of a $\BD$ map to belong to $\BV$ can be entirely attributed to its absolutely continuous part.  Our result shows that this is exactly the case. In fact,
\[
 \BV_{\mathrm{loc}}(\Omega;\R^d)
 =
 \left\{
 u\in\BD_{\mathrm{loc}}(\Omega):
 \operatorname{skew}\nabla_{\ap}u
 \in L^1_{\mathrm{loc}}(\Omega)
 \right\}.
\]

\cref{thm:main} also gives an interesting a posteriori interpretation of $p=1$ Korn inequalities with exceptional sets. Such results recover coercivity either after discarding a suitable region of the domain or after subtracting suitable rigid motions on different pieces. From the present viewpoint, one may heuristically interpret these procedures as ways of isolating the part of the deformation where the full approximate gradient, or equivalently its uncontrolled null-space component, fails to be integrable. \\

The same picture holds for general $\C$-elliptic operators. The measure
$\A u$ controls the component of the first derivative seen by the symbol,
while the remaining component may lie in its kernel. Requiring
$\nabla_{\ap}u\in L^1$ supplies precisely the missing first-order
information. As a consequence, we also obtain the corresponding
characterization of special functions:
\[
 \mathrm{SBV}_{\mathrm{loc}}(\Omega;V)
 =
 \left\{
 u\in\mathrm{SBV}^{\A}_{\mathrm{loc}}(\Omega):
 \nabla_{\ap}u\in
 L^1_{\mathrm{loc}}(\Omega;V\otimes\R^d)
 \right\}.
\]

Indeed, if
\[
u\in \mathrm{SBV}^{\A}(\Omega)
\qquad\text{and}\qquad
\nabla_{\ap}u\in L^1(\Omega;V\otimes\R^d),
\]
then \cref{thm:main} gives $u\in \BV(\Omega;V)$. Applying the estimate locally on arbitrary open subsets $U\subset\subset\Omega$ yields
\[
|Du|(U)\le C_{d,\A}\left(
|\A u|(U)+\int_U|\nabla_{\ap}u|\,dx
\right),
\]
and hence, by outer regularity,
\[
|D^su|\le C_{d,\A}|(\A u)^s|.
\]
Since $u\in \mathrm{SBV}^{\A}(\Omega)$, the singular part $(\A u)^s$ is concentrated on the jump set. Therefore $D^su$ is concentrated on the same rectifiable set. As the Cantor part of a $\BV$ derivative vanishes on every $\sigma$-finite $\mathcal H^{d-1}$-set, it follows that $D^cu=0$, and hence $u\in \mathrm{SBV}(\Omega;V)$. \\

Finally, using the characterization $\frac{\mathrm{d}\A u}{\mathrm{d}\Ld}=A(\nabla_{\mathrm{ap}}u)$ for a linear map $A$, given in \cite[Lemma 3.1]{GR2019diff}, we also have
\[
\mathrm{SBV}^p_{\mathrm{loc}}(\Omega;V)
=
\left\{
u\in \mathrm{SBV}_{\mathrm{loc}}^{\mathcal{A}}(\Omega)
:
\nabla_{\mathrm{ap}} u
\in \LL_{\mathrm{loc}}^p(\Omega;V\otimes\R^d),\ 
\mathcal H^{d-1}(J_u\cap U)<\infty\ \text{for every }U\Subset\Omega
\right\}.
\]

 \subsection{Sketch of the proof}\label{sbsct:proofsketch}
After interaction with generative AI (see also \cref{AI}), the proof turned out to be surprisingly simple. Is based on a localization and mollification argument both occouring at the same scale. Fix $\eps>0$, mollify $u$ at scale $\eps$ and cover the
interior of $\Omega$ by a grid of cubes of size comparable with $\eps$.
On each cube, the Poincar\'e inequality for $\BV^{\A}$ \cite[Theorem~3.2]{breit2017traces} (see also \cref{thm:PoincareA}) selects a
null-space polynomial $a_Q\in\mathrm{Ker}(\A)$ such that
$u-a_Q$ is controlled in $\mathrm{L}^1$ by $|\A u|$ on a slightly larger
ball. We then split locally
\[
 \nabla u_\eps
 =
 \nabla\bigl((u-a_Q)*\varrho_\eps\bigr)
 +
 \nabla(a_Q*\varrho_\eps)
\]
and we aim to find an $\eps$-independent $\mathrm{L}^1$ bound for $ \nabla u_\eps$. \\

The first term is directly controlled by the Poincar\'e estimate. The main point is therefore the second term. More precisely, since $a_Q$ is smooth, the crucial point is to estimate $ \|\nabla a_Q \|_{\mathrm{L}^1}$, since
\[
\|\nabla(a_Q\star \varrho_{\eps})\|_{\mathrm{L}^1}\lesssim \|\nabla a_Q \|_{\mathrm{L}^1}.
\]
This is where the summability of $\nabla_{\mathrm{ap}} u$ enters the argument: the key point is \cref{prop:local}, which, for the same $a_Q$ selected above, by exploiting standard singular integral theory (\cref{prop:globalweak}) provides an estimate of the form 
\[
 \|\nabla_{\ap}u-\nabla a_Q\|_{\mathrm{L}^{1,\infty}}
 \lesssim |\A u|.
\]
Since $\mathrm{Ker}(\A)$ is finite-dimensional, due to \cite[Theorem~2.6]{breit2017traces} (see also \cref{thm:FDN} below) the weak-$\mathrm{L}^1$ quasi-norm and the strong $\mathrm{L}^1$ norm are equivalent on the space of gradients of null-space polynomials. Therefore
\[
\|\nabla a_Q\|_{L^1} \lesssim \|\nabla a_Q\|_{L^{1,\infty}}\lesssim  \|\nabla_{\mathrm{ap}} u - \nabla  a_Q\|_{L^{1,\infty}} +  \|\nabla_{\mathrm{ap}} u\|_{L^{1,\infty}}\lesssim \|\nabla_{\mathrm{ap}} u\|_{L^{1 }} +  |\A u|.
\]
Combining the two estimates and summing over the grid gives an $\eps$-independent bound
\[
 \int_{\Omega'} |\nabla u_\eps|\,dx
 \lesssim
 |\A u|(\Omega)+\|\nabla_{\ap}u\|_{\mathrm{L}^1(\Omega)}
\]
for every $\Omega'\subset\subset\Omega$. The conclusion then follows from
$\BV$ compactness and an exhaustion of $\Omega$.

% \subsection{Acknowledgements}\label{sbsct:acknowledgements} 

\subsection{AI content disclosure}\label{AI}
The problem addressed in this note had been a long-term project of the author. The author's initial approach was based on a nonlocal approximation of the gradient by spherical averages. In the model case of $\BD$, the resulting quantity splits into a part controlled
by the symmetric gradient and a rotational part, which can be estimated by Calder\'on--Zygmund theory for $p>1$. The endpoint strategy was to seek a direct $L^1$ estimate of this rotational term, initially near regular
jump sets, using the geometry of tubular neighbourhoods. Successive interaction with generative AI  brought this strategy to a final complete working form but not in the generality here presented.\\

Generative AI was then used again as an interactive research assistant to discuss, test, and simplify the argument. Through an extended dialogue, the working strategy was progressively dismantled: several of
its technical components turned out not to be essential, and the argument was reduced to a much simpler local mechanism. In particular, the relevant issue was identified as the control of the gradient of the null-space polynomial selected by the Poincar\'e projection \cref{prop:local}. The local weak endpoint
estimate and its combination with finite-dimensional norm equivalence emerged from this iterative discussion rather than being part of the author's initial proof strategy. This observation led to the localization
and mollification argument used in the present paper.\\

The AI was also used to test intermediate arguments, locate relevant references, and assist with the English presentation. The author takes full responsibility for the contents of the paper.\\

The specific AI provider used is not relevant to the mathematical discussion or to the account of the interaction given above.
% Generative AI  was used as an interactive research
% assistant during the development of the proof. The initial strategy, proposed by the author, was based on a localization and mollification argument, with local approximations by elements of the null-space of $\mathcal A$ and a subsequent summation over a decomposition of the domain. Several possible implementations of this strategy were explored
% through an extended dialogue with the AI. During this process, the argument was progressively simplified, and the key local mechanism was identified: for the same null-space polynomial selected by the Poincar\'e projection, a weak-$L^1$ estimate controls $\nabla_{\mathrm{ap}}u-\nabla a$, while finite-dimensionality of $\mathrm{Ker}(\mathcal A)$ upgrades this to the required strong $L^1$ control of $\nabla a$. This observation led to the present proof.\\

% The AI was also used to test intermediate arguments, check scaling and localization details, locate relevant references, and assist with the English presentation. All mathematical statements and arguments were independently checked by the author, who takes full responsibility for the contents of the paper.

\section{Preliminaries and main result}
\subsection{General notation}\label{sbsct:notation} 
Throughout the paper we assume $d\ge2$. The one-dimensional case is immediate. The notation $\Ld$ stands for the $d$-dimensional Lebesgue measure on $\R^d$. Throughout, constants denoted by $C_{d,\A}$ may change from line to line and depend only on the dimension and the operator $\A$. $B_r(x):=x+rB$ where $B$ stands for the unit Euclidean ball of $\R^d$, while $Q_r(x):=x+rQ$ where $Q:=\left[-\sfrac{1}{2},\sfrac{1}{2}\right]^d$ is the $d$-dimensional unit cube with barycenter at $0$. 
For a given measurable set $A$, we denote by $\ca_A(x)$ the characteristic function of $A$, which takes the value $1$ whenever $x\in A$ and $0$ elsewhere. For $v\in V$ and $\xi\in\R^d$, we identify the tensor
$v\otimes\xi\in V\otimes\R^d$ with the linear map
$\R^d\to V$ defined by
\[
    (v\otimes\xi)h=(\xi\cdot h)v.
\]

\subsection{Maps of bounded $\mathcal{A}$-variation}\label{sbsct:Avar}
Let $\mathcal{A}:C^{\infty}(\R^d;V) \to C^{\infty}(\R^d;W)$ be, for some Euclidean spaces $V,W$, a first-order homogeneous differential operator with constant coefficients
\begin{equation}\label{eq:Adef}
 \A u:=\sum_{j=1}^d A_j\partial_j u,
 \qquad A_j\in\operatorname{Lin}(V,W).
\end{equation}
The space $\mathrm{Lin}(V;W)$ denotes the family of all linear maps from $V$ to $W$. The space of functions with \textit{bounded $\A$-variation} (see also \cite{breit2017traces}) is defined by
 \[
\BV^{\A}(\Omega):= \left\{\left. u\in L^1(\Omega;V) \ \right| \ \A u\in \mathcal{M}(\Omega;W)\right\},
\]
where $\mathcal{M}(\Omega;W)$ denotes the space of $W$-valued finite Radon measures on $\Omega$, and the distributional $\A$-gradient is defined by:
\begin{equation}\label{eqn:adjoint}
\int_{\Omega}\varphi \cdot \mathrm{d}\A u:=\int_\Omega \A^*\varphi\cdot u \mathrm{d} x, \ \forall\  \varphi\in C^{\infty}_c(\Omega;W),
\end{equation}
where the $L^2$-adjoint operator $\A^*:C^1(\R^d;W)\rightarrow C^0(\R^d;V)$ is defined in terms of $\{A_j^*\}_{j=1}^n \subset \mathrm{Lin}(W;V)$, satisfying $A_j z \cdot \eta =z \cdot A_j^* \eta$ for all $z\in V,\eta \in W$, by 
\[
\A^*v=-\sum_{j=1}^n A^*_j \partial_jv .
\]
The classical examples are $\A=\nabla$, for which $\BV^{\A}=\BV$ (see \cite{ambrosio2000functions}), and the symmetric gradient $\A=\mathcal{E}=\frac{\nabla + \nabla^t}{2}$, for which $\BV^{\A}=\BD$ (see \cite{ambrosio1997fine}).\\

For $\xi\in\C^d$ and $v\in V+iV$, the \textit{complexified symbol} is defined as
\begin{equation}\label{eqn:symbol}
\aA[\xi]v:=\sum_{j=1}^d \xi_jA_jv.
\end{equation}
\begin{definition}
The operator $\A$ is said to be \textbf{$\R$-elliptic} if
\(
    \aA[\xi]:V\to W
\)
is injective for every $\xi\in\R^d\setminus\{0\}$. The operator $\A$ is said to be \textbf{$\C$-elliptic} if its complexified symbol
\(
    \aA[\xi]:V+iV\to W+iW
\)
is injective for every $\xi\in\C^d\setminus\{0\}$.
\end{definition}
\begin{remark}\label{rmk:CimpliesR}
{\rm Clearly $\C$-ellipticity implies $\R$-ellipticity.}
\end{remark}
The following useful Leibniz rule applies (see also \cite{breit2017traces}).
\begin{lemma}[Cut-off product rule \cite{breit2017traces}]\label{lem:product}
Let $U\subset\R^d$ be open, $v\in \BV^{\A}(U)$, and $\eta\in C_c^1(U)$. Extend $\eta v$ by zero outside $U$. Then $\eta v\in \BV^{\A}(\R^d)$ and
\begin{equation}\label{eq:product}
 \A(\eta v)
 =\eta\,\A v+\aA[\nabla\eta]v\,\Ld
 \qquad\text{in }\R^d.
\end{equation}
Consequently,
\begin{equation}\label{eq:productbound}
 |\A(\eta v)|(\R^d)
 \le \|\eta\|_{L^\infty(U)}|\A v|(U)
 +C_{\A}\|\nabla\eta\|_{L^\infty(U)}\|v\|_{\mathrm{L}^1(U)}.
\end{equation}
\end{lemma}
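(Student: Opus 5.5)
The plan is to verify the identity \eqref{eq:product} directly from the definition \eqref{eqn:adjoint}, by testing against arbitrary $\varphi\in C_c^\infty(\R^d;W)$ and moving the cut-off onto the test function. Since $\eta\in C^1_c(U)$ and $v\in L^1(U;V)$, the product $\eta v$ extended by zero lies in $L^1(\R^d;V)$. Its support is contained in $K:=\supp\eta$, a compact subset of $U$. For a test field $\varphi$ we then write
\[
\int_{\R^d}\A^*\varphi\cdot \eta v\,dx=\int_U \eta\,\A^*\varphi\cdot v\,dx .
\]

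The key algebraic step is the Leibniz identity for the adjoint:
\[
\A^*(\eta\varphi)=\eta\,\A^*\varphi-\sum_j \partial_j\eta\,A_j^*\varphi .
\]
Hence
\[
\eta\,\A^*\varphi\cdot v=\A^*(\eta\varphi)\cdot v+\sum_j\partial_j\eta\, A_j^*\varphi\cdot v=\A^*(\eta\varphi)\cdot v+\varphi\cdot\aA[\nabla\eta]v .
\]
Since $\eta\varphi$ is only $C^1_c(U;W)$ rather than smooth, we need one approximation. We mollify to get $\psi_k:=(\eta\varphi)*\rho_{1/k}\in C_c^\infty(U;W)$ for $k$ large. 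Then $\psi_k\to\eta\varphi$ uniformly, and $\A^*\psi_k\to\A^*(\eta\varphi)$ uniformly, with supports in a fixed compact subset of $U$. Applying \eqref{eqn:adjoint} on $U$ to each $\psi_k$ and passing to the limit gives
\[
\int_U\A^*(\eta\varphi)\cdot v\,dx=\int_U \eta\varphi\cdot d\A v .
\]
The left side converges because $v\in L^1$, and the right side because $|\A v|(U)<\infty$ and the convergence is uniform. This approximation is the only mildly delicate point; everything else is bookkeeping.

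Combining the two displays, we obtain for every $\varphi\in C_c^\infty(\R^d;W)$
\[
\int_{\R^d}\A^*\varphi\cdot\eta v\,dx=\int_{\R^d}\varphi\cdot d\bigl(\eta\,\A v+\aA[\nabla\eta]v\,\Ld\bigr).
\]
Here $\eta\,\A v$ denotes the measure $\A v$ restricted to $U$ and weighted by $\eta$, extended by zero. This extension is a finite Radon measure on $\R^d$, since $\eta$ is bounded with compact support in $U$. Likewise $\aA[\nabla\eta]v\in L^1(\R^d;W)$. Therefore $\A(\eta v)$ is the finite measure in \eqref{eq:product}, and $\eta v\in\BV^\A(\R^d)$.

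Finally, \eqref{eq:productbound} follows by taking total variations. We have $|\eta\,\A v|(\R^d)\le\|\eta\|_{L^\infty(U)}|\A v|(U)$. We also have the pointwise bound $|\aA[\nabla\eta]v|\le \bigl(\sum_j|A_j|\bigr)|\nabla\eta||v|$, so one may take $C_\A:=\sum_j\|A_j\|$.
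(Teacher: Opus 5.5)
Your proposal is correct and takes the same route as the paper: test against $\varphi$, move $\eta$ onto the test field using the Leibniz rule for $\A^*$, then take total variations. Two of your details are ones the paper's proof glosses over without changing the argument: the mollification step, needed because $\eta\varphi$ is only $C^1_c$ while \eqref{eqn:adjoint} is stated for smooth test fields, and your observation that $C_\A=\sum_j\|A_j\|$ requires no ellipticity.
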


\begin{proof}
For $\varphi\in C_c^1(\R^d;W)$, using the distributional adjoint $\A^*$ in \eqref{eqn:adjoint} and the Leibniz rule for $\A^*(\eta\varphi)$, a simple computation immediately gives
\[
 \langle \A(\eta v),\varphi\rangle
 =\langle \eta\A v,\varphi\rangle
 +\int_U \varphi\cdot\aA[\nabla\eta]v\,dx.
\]
This is \eqref{eq:product}. Since the finite-dimensional bilinear map $(\xi,v)\mapsto\aA[\xi]v$ satisfies $|\aA[\xi]v|\le C_{\A}|\xi||v|$, by $\C$-ellipticity, estimate \eqref{eq:productbound} follows.
\end{proof}

We recall from \cite{GR2019diff} that, whenever $\mathcal{A}$ is $\mathbb{C}$-elliptic, every $u\in \BV^{\A}(\Omega)$ is approximately differentiable a.e. in $\Omega$. The cited result is stated on $\R^d$; the local version below follows easily by multiplication with a compactly supported cut-off and the Leibniz rule for $\A$ in \cref{lem:product}.
\begin{theorem}[{\cite[Lemma~3.1]{GR2019diff}} ]\label{thm:criticaldiff}
Let $\Omega\subset\R^d$ be open, let $\A$ be an $\R$-elliptic first-order homogeneous differential operator, and let $u\in \BV^{\A}_{\mathrm{loc}}(\Omega)$. Then $u$ is
$L^1$-differentiable at $\Ld$-almost every point of $\Omega$.
In particular, there exists
$\nabla_{\ap}u(x)\in V\otimes\R^d$ such that
\[
    \fint_{B_r(x)}
    |u(y)-u(x)-\nabla_{\ap}u(x)(y-x)|\,dy
    =o(r)
\]
for $\Ld$-almost every $x\in\Omega$.
\end{theorem}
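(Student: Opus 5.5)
The plan is to reduce the statement to the global result \cite[Lemma~3.1]{GR2019diff}, which gives $L^1$-differentiability $\Ld$-a.e. for maps in $\BV^{\A}(\R^d)$. The reduction uses a cut-off and the product rule of \cref{lem:product}. The point is that $L^1$-differentiability at $x$ only involves the values of $u$ on arbitrarily small balls around $x$. It is therefore a local property and is insensitive to modifying $u$ away from $x$.

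\textbf{Step 1: exhaustion by balls.} Choose a countable family of balls $B_k=B_{r_k}(x_k)$ with $B_{2r_k}(x_k)\subset\subset\Omega$ and $\bigcup_k B_k=\Omega$; for instance, take rational centres and radii. For each $k$, fix $\eta_k\in C^1_c(B_{2r_k}(x_k))$ with $0\le\eta_k\le1$ and $\eta_k\equiv1$ on $B_k$.

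\textbf{Step 2: globalization by cut-off.} Set $U=B_{2r_k}(x_k)$. Since $u\in\BV^{\A}_{\mathrm{loc}}(\Omega)$, we have $u|_U\in\BV^{\A}(U)$. By \cref{lem:product}, $v_k:=\eta_k u$, extended by zero, belongs to $\BV^{\A}(\R^d)$, and \eqref{eq:productbound} bounds its total $\A$-variation. Here the hypothesis of \cite{GR2019diff} must be checked: the operator must be $\R$-elliptic (or $\C$-elliptic, as in \cref{thm:main}, which suffices by \cref{rmk:CimpliesR}). Under it, $v_k$ is $L^1$-differentiable at every point of $\R^d\setminus N_k$ for some $\Ld$-null set $N_k$.

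\textbf{Step 3: localization back.} Let $x\in B_k\setminus N_k$ and let $r<\dist(x,\partial B_k)$. Then $v_k=u$ on $B_r(x)$, so
\[
\fint_{B_r(x)}|u(y)-u(x)-\nabla_{\ap}v_k(x)(y-x)|\,dy
=\fint_{B_r(x)}|v_k(y)-v_k(x)-\nabla_{\ap}v_k(x)(y-x)|\,dy=o(r).
\]
Here $u(x)$ is understood as the $L^1$-Lebesgue value, which coincides with that of $v_k$ at $x$. Hence $u$ is $L^1$-differentiable at $x$, with $\nabla_{\ap}u(x):=\nabla_{\ap}v_k(x)$.

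\textbf{Step 4: consistency.} The $L^1$-differential at a point is unique. Indeed, if two affine maps $a+Lh$ and $a'+L'h$ both satisfy the $o(r)$ condition, then $\fint_{B_r}|(a-a')+(L-L')h|\,dh=o(r)$, which forces $a=a'$ and $L=L'$. So the definitions coming from different $k$ agree on overlaps. Taking $N:=\bigcup_k (N_k\cap B_k)$, a countable union of null sets, we obtain $L^1$-differentiability on $\Omega\setminus N$.

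The only genuinely nontrivial input is the global statement for $\BV^{\A}(\R^d)$, which I take from \cite{GR2019diff}. Should it need to be reproved, I would represent a compactly supported $v$ as $v=K*\A v$, with $K$ homogeneous of degree $1-d$, using a left inverse of the symbol given by $\R$-ellipticity. Then $\nabla v$ is a Calder\'on--Zygmund operator applied to the measure $\A v$. Weak-$(1,1)$ bounds together with a standard density/maximal-function argument would then yield a.e. $L^1$-differentiability. I expect this step to be the main obstacle; within the reduction above, everything is routine.
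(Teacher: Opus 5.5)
Your proposal is correct and follows the same route the paper indicates: the paper remarks that the local statement follows from the global result \cite[Lemma~3.1]{GR2019diff} by multiplying with a compactly supported cut-off and applying the Leibniz rule of \cref{lem:product}, which is exactly your Steps~2--3. Your countable exhaustion by balls and the uniqueness check in Step~4 only fill in details the paper leaves implicit.
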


\section{Technical tools}
 
\subsection{Poincarè inequality and characterization of $\C$-ellipticity}\label{sbsct:Poincare} 

For a set $U\subset\R^d$, let $\Pi_U$ denote the $L^2(U;V)$-orthogonal projection onto the finite-dimensional space 
\[
\mathrm{Ker}(\A):= \{ a\in (C^{\infty}_c(\R^d;V))'  \ : \ \A a=0 \}.
\]
As shown in \cite[Section~3.1]{breit2017traces}, $\Pi_U$ extends boundedly to $\mathrm{L}^1(U;V)$; this extension is the one used below.

\begin{theorem}[Poincar\'e inequality in $\BV^{\A}$; {\cite[Theorem~3.2]{breit2017traces}}]\label{thm:PoincareA}
Assume that $\A$ is $\C$-elliptic. There exists $C_{d,\A}>0$ such that for every ball $B_r(x)\subset\R^d$ and every $u\in \BV^{\A}(B_r(x))$,
\begin{equation}\label{eq:PoincareA}
 \inf_{q\in\mathrm{Ker}(\A)}\|u-q\|_{\mathrm{L}^1(B_r(x))}
 \le \|u-\Pi_{B_r(x)} u\|_{\mathrm{L}^1(B_r(x))}
 \le C_{d,\A}\,r\,|\A u|(B_r(x)).
\end{equation}
\end{theorem}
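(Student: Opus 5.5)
By scaling and translation it suffices to treat the unit ball $B=B_1(0)$. Set $u_{x,r}(y):=u(x+ry)$. Then $\A u_{x,r}=r\,(\A u)(x+r\,\cdot)$, so $|\A u_{x,r}|(B)=r^{1-d}|\A u|(B_r(x))$, while $\|u_{x,r}\|_{\mathrm{L}^1(B)}=r^{-d}\|u\|_{\mathrm{L}^1(B_r(x))}$. The space $\mathrm{Ker}(\A)$ is invariant under translations and dilations, because $\A$ is homogeneous with constant coefficients. Hence $(\Pi_{B_r(x)}u)_{x,r}=\Pi_B u_{x,r}$, and the factor $r$ in \eqref{eq:PoincareA} is exactly what this rescaling produces. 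The first inequality in \eqref{eq:PoincareA} is trivial, since $\Pi_{B}u\in\mathrm{Ker}(\A)$. So everything reduces to showing
\[
\|v-\Pi_B v\|_{\mathrm{L}^1(B)}\le C_{d,\A}|\A v|(B)\qquad\forall v\in\BV^\A(B).
\]

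I would first record the structural facts coming from $\C$-ellipticity. By the characterization in \cite{breit2017traces} (\cref{thm:FDN}), $\mathrm{Ker}(\A)$ is a finite-dimensional space of polynomials. Consequently $\Pi_B v=\sum_k\big(\int_B v\cdot p_k\big)p_k$ for an $L^2(B)$-orthonormal basis $(p_k)$ of $\mathrm{Ker}(\A)$. Each $p_k$ is bounded on $B$, so $\Pi_B$ extends to a bounded operator $\mathrm{L}^1(B;V)\to\mathrm{Ker}(\A)$, and it is still a projection. Moreover $\A(\Pi_B v)=0$, so $w:=v-\Pi_Bv$ satisfies $\A w=\A v$ and $\Pi_Bw=0$.

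Then I argue by contradiction and compactness. Suppose there are $v_n$ with $\|v_n-\Pi_Bv_n\|_{\mathrm{L}^1(B)}=1$ and $|\A v_n|(B)\to0$. Set $w_n:=v_n-\Pi_Bv_n$. This sequence is bounded in $\BV^\A(B)$. If the embedding $\BV^\A(B)\hookrightarrow \mathrm{L}^1(B)$ is compact, a subsequence converges in $\mathrm{L}^1$ to some $w$ with $\|w\|_{\mathrm{L}^1(B)}=1$. Testing \eqref{eqn:adjoint} against $\varphi\in C^\infty_c(B;W)$ gives $\A w=0$ in $B$. Since the distributional kernel of a $\C$-elliptic operator on a connected open set again consists of the (finitely many) polynomials of $\mathrm{Ker}(\A)$, we get $w\in\mathrm{Ker}(\A)$. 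On the other hand, continuity of $\Pi_B$ on $\mathrm{L}^1$ gives $\Pi_Bw=\lim\Pi_Bw_n=0$, so $w=\Pi_Bw=0$, a contradiction.

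The main obstacle is the compact embedding $\BV^\A(B)\hookrightarrow\mathrm{L}^1(B)$ up to the boundary, together with the fact that local solutions of $\A w=0$ are polynomials in $\mathrm{Ker}(\A)$. For the interior part I would use the representation $\eta v=K*\A(\eta v)$ for a cut-off $\eta$. Here $K$ is a kernel homogeneous of degree $1-d$, obtained from a left inverse of the $\R$-elliptic symbol $\aA[\xi]$. Combined with \cref{lem:product}, this shows that convolution with a locally integrable kernel maps bounded sets of measures to precompact sets in $\mathrm{L}^1_{\mathrm{loc}}$. To rule out loss of mass at $\partial B$, I would try to use the extension/trace theory of \cite{breit2017traces}: for $\C$-elliptic $\A$, functions on the Lipschitz domain $B$ extend to $\BV^\A(\R^d)$ with control of norms, which reduces the problem to the compactly supported case. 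This boundary step is exactly where $\C$-ellipticity, rather than mere $\R$-ellipticity, is essential.
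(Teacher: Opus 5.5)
Your scaling reduction, the $\mathrm{L}^1$-extension of $\Pi_B$ and the contradiction scheme are fine. So is the identification $w\in\mathrm{Ker}(\A)$, although it is not immediate from \cref{thm:FDN} as quoted, which concerns distributions on all of $\R^d$. It comes from the algebraic form of $\C$-ellipticity: by Hilbert's Nullstellensatz there are $\ell\in\mathbb N$ and homogeneous constant-coefficient operators $\mathbb L_\alpha$ of order $\ell-1$ with $\partial^\alpha=\mathbb L_\alpha(D)\circ\A$ for all $|\alpha|=\ell$. Hence $\A w=0$ in $B$ forces $D^\ell w=0$ in $B$.

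The genuine gap is the step you yourself single out: the compactness of $\BV^\A(B)\hookrightarrow\mathrm{L}^1(B)$ up to $\partial B$, which carries the whole content of the statement. The representation $\eta v=K*\A(\eta v)$ only yields compactness in $\mathrm{L}^1_{\mathrm{loc}}(B)$ and uses only $\R$-ellipticity. No soft argument prevents the mass of $w_n$ from escaping to $\partial B$. For instance, for the $\R$- but not $\C$-elliptic operator $\partial_{\bar z}$ on $\R^2\cong\C$, the maps $\frac{n+2}{2\pi}z^n$ have unit $\mathrm{L}^1(B)$-norm and vanishing $\A$-variation, yet converge to $0$ locally uniformly in $B$. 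The remedy you suggest is circular. In \cite{breit2017traces} the trace is constructed by comparing the null-space projections $\Pi_Q u$ on cubes shrinking towards $\partial\Omega$, and the differences $\Pi_Qu-\Pi_{Q'}u$ are estimated precisely by the Poincar\'e inequality on balls/cubes. Extension by zero with norm control then rests on that trace through the Gauss--Green formula. As it stands, the proposal reduces the statement to results whose proofs use it.

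The paper gives no proof and simply imports \cite[Theorem~3.2]{breit2017traces}. The standard argument needs no compactness and uses the same identity as above. On the unit ball, insert $\partial^\alpha u=\mathbb L_\alpha(D)\A u$ into the Sobolev representation of $u$ through its averaged Taylor polynomial $Pu$ of degree at most $\ell-1$, taken with respect to $B_{1/2}$. Integrating by parts $\ell-1$ times gives $u(y)=Pu(y)+\int_B K(y,z)\,\mathrm d\A u(z)$ with $|K(y,z)|\le C|y-z|^{1-d}$, first for $u$ smooth in $B$ and then by strict interior approximation. Fubini then gives $\|u-Pu\|_{\mathrm{L}^1(B)}\le C|\A u|(B)$.

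Since averaged Taylor polynomials commute with derivatives, $\A(Pu)$ is an averaged Taylor polynomial of $\A u$, hence bounded in $\mathrm{L}^1(B)$ by $C|\A u|(B)$. On the finite-dimensional space of polynomials of degree at most $\ell-1$, which contains $\mathrm{Ker}(\A)$, the seminorm $P\mapsto\|\A P\|_{\mathrm{L}^1(B)}$ vanishes exactly on $\mathrm{Ker}(\A)$. This yields $\inf_{q\in\mathrm{Ker}(\A)}\|u-q\|_{\mathrm{L}^1(B)}\le C|\A u|(B)$, and $\|u-\Pi_Bu\|_{\mathrm{L}^1(B)}\le(1+\|\Pi_B\|)\inf_q\|u-q\|_{\mathrm{L}^1(B)}$ concludes. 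With such a representation the compact embedding you wanted also follows, but it is then no longer needed.
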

Finally, the following useful characterization will be used to transform the $L^{1,\infty}$ control on rotations into an $L^1$-type control. 
\begin{theorem}[{\cite[Theorem~2.6]{breit2017traces}}]\label{thm:FDN}
Let $\A$ be a first-order homogeneous differential operator with constant coefficients. The following are equivalent:
\begin{enumerate}[label=\textup{(\alph*)}]
 \item $\A$ has finite-dimensional null-space $\mathrm{Ker}(\A)$;
 \item $\A$ is $\C$-elliptic;
 \item there exists $\ell\in\mathbb N$ such that $\mathrm{Ker}(\A)$ is contained in the space of $V$-valued polynomials of degree at most $\ell$.
\end{enumerate}
\end{theorem}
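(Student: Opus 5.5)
The plan is to prove the cycle of implications (b)$\Rightarrow$(c)$\Rightarrow$(a)$\Rightarrow$(b). Of these, (c)$\Rightarrow$(a) is immediate, since $V$-valued polynomials of degree at most $\ell$ form a finite-dimensional space. The other two steps carry the content.

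For (a)$\Rightarrow$(b) I would argue by contraposition. Suppose $\A$ is not $\C$-elliptic, and pick $\xi\in\C^d\setminus\{0\}$ and $v\in (V+iV)\setminus\{0\}$ with $\aA[\xi]v=0$. For any entire function $f$, set $u_f(x):=f(x\cdot\xi)\,v$. By the chain rule,
\[
\A u_f=f'(x\cdot\xi)\,\aA[\xi]v=0 .
\]
Because $\A$ has real coefficients, both $\mathrm{Re}\,u_f$ and $\mathrm{Im}\,u_f$ lie in $\mathrm{Ker}(\A)$. Now take $f(z)=z^n$ for $n\in\mathbb N$. Since $\xi\neq0$ and $v\neq0$, the real-valued functions
\[
x\mapsto \mathrm{Re}\bigl((x\cdot\xi)^n v\bigr)\quad\text{or}\quad x\mapsto \mathrm{Im}\bigl((x\cdot\xi)^n v\bigr)
\]
give, for each $n$, a nonzero homogeneous polynomial of degree exactly $n$. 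To see that at least one of them is nonzero, restrict to a real line on which $x\cdot\xi$ is not identically zero. Polynomials of distinct homogeneity degrees are linearly independent, so $\mathrm{Ker}(\A)$ is infinite-dimensional.

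For (b)$\Rightarrow$(c), the idea is to produce a constant-coefficient \emph{annihilation identity}: find $k\in\mathbb N$ and, for each multi-index $|\alpha|=k$, a matrix-valued polynomial $L_\alpha(\xi)\in\operatorname{Lin}(W,V)$ such that
\[
\xi^\alpha\,\mathrm{Id}_V=L_\alpha(\xi)\,\aA[\xi]\qquad\text{for all }\xi\in\C^d .
\]
Granting this identity, for any distribution $a$ with $\A a=0$ we get $\partial^\alpha a=L_\alpha(\partial)\A a=0$ for all $|\alpha|=k$. A distribution all of whose $k$-th derivatives vanish is a polynomial of degree less than $k$, which gives (c) with $\ell=k-1$.

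To build $L_\alpha$, consider the Hermitian-type product $M(\xi):=\aA[\xi]^{T}\aA[\xi]$, where the transpose is taken without conjugation so that the entries stay polynomial in $\xi$. Then $\det M(\xi)$ is a homogeneous polynomial, and Cramer's rule gives
\[
\operatorname{adj}M(\xi)\,\aA[\xi]^T\aA[\xi]=\det M(\xi)\,\mathrm{Id}.
\]
The catch is that $\det M$ may vanish at nonzero complex $\xi$ even when $\aA[\xi]$ is injective. To avoid this, I would use the Cauchy--Binet formula instead: it expresses any maximal ($\dim V\times\dim V$) minor $m_I(\xi)$ of $\aA[\xi]$ as satisfying $m_I(\xi)\mathrm{Id}=P_I(\xi)\aA[\xi]$, where $P_I$ is obtained from the adjugate of the corresponding square submatrix. $\C$-ellipticity says exactly that the minors $\{m_I\}$ have no common zero in $\C^d\setminus\{0\}$. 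Hilbert's Nullstellensatz then gives $\xi_j^{N}\in(m_I)_I$ for each $j$, so every monomial of degree $k:=dN$ lies in the ideal. Writing $\xi^\alpha=\sum_I c_{\alpha,I}(\xi)m_I(\xi)$ and setting $L_\alpha:=\sum_I c_{\alpha,I}P_I$ gives the identity.

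I expect this algebraic step to be the main obstacle: arranging the left inverse through square submatrices correctly, and invoking the Nullstellensatz on homogeneous ideals whose only common zero is the origin.
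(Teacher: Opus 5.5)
The paper gives no proof of this statement and simply imports it from \cite{breit2017traces}. Your argument is correct and is essentially the standard proof of this result: complex plane waves $f(x\cdot\xi)v$ along a direction $\xi\in\C^d\setminus\{0\}$ where $\aA[\xi]$ fails to be injective give (a)$\Rightarrow$(b), and an annihilation identity $\partial^\alpha=L_\alpha(\partial)\A$, built from the $\dim V\times\dim V$ minors of $\aA[\xi]$ via the Nullstellensatz, gives (b)$\Rightarrow$(c). The identity $m_I(\xi)\,\mathrm{Id}=P_I(\xi)\aA[\xi]$ is just Cramer's rule for the square submatrix composed with the row selection, not Cauchy--Binet; Cauchy--Binet is what shows $\det(\aA[\xi]^T\aA[\xi])=\sum_I m_I(\xi)^2$, which explains why that determinant can vanish at complex $\xi\neq0$.
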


% The scale factor in \eqref{eq:PoincareA} is essential: $\A$ is first order, hence $|\A u|$ has the same physical scaling as a gradient measure.

% \subsection{Critical differentiability and the absolutely continuous density}

% \subsection{A classical weak-$(1,1)$ multiplier theorem}
% \textcolor{red}{
% We shall use the following standard Calderon-Zygmund fact. See, for instance, \cite[Sections~5.2--5.3]{Grafakos2014}, in particular the weak-$(1,1)$ theorem for Calderon-Zygmund singular integrals.}
% \textcolor{red}{\begin{theorem}[Calderon-Zygmund weak type]\label{thm:CZ}
% Let $m\in C^\infty(\R^d\setminus\{0\};\operatorname{Lin}(W,Z))$ be homogeneous of degree zero, where $W,Z$ are finite-dimensional Euclidean spaces. Let $T_m$ be the Fourier multiplier with symbol $m$. Then there exists $C_m>0$ such that
% \begin{equation}\label{eq:CZ}
%  \|T_m f\|_{\mathrm{L}^{1,\infty}(\R^d;Z)}
%  \le C_m\|f\|_{\mathrm{L}^1(\R^d;W)}
% \end{equation}
% for every $f\in \mathrm{L}^1(\R^d;W)$ for which $T_mf$ is initially defined, and hence by the standard weak-type extension for all $f\in \mathrm{L}^1$.
% \end{theorem}}
% \textcolor{red}{Smooth zero-homogeneous multipliers are sums of a bounded multiple of the identity and classical principal-value convolution operators with smooth homogeneous kernels of degree $-d$, so \cref{thm:CZ} is exactly the standard Calderon-Zygmund endpoint estimate.
% }

\subsection{Mollified gradients at differentiability points}\label{sbsct:Molly}

Fix a standard mollifier $\varrho\in C_c^\infty(B_1)$, $\varrho\ge0$, $\int\varrho=1$, and set $\varrho_\delta(x)=\delta^{-d}\varrho(x/\delta)$. Note that, in our notation, for a map $v:\R^d\to V$,
\[
    \nabla(v*\varrho_\delta)(x)
    =
    \int_{\R^d}v(y)\otimes\nabla\varrho_\delta(x-y)\,dy,
\]
since, when applied to $e_j$, the right-hand side equals
$\int_{\R^d}v(y)\partial_j\varrho_\delta(x-y)\,dy
=\partial_j(v*\varrho_\delta)(x)$.

\begin{lemma}[Mollified approximate gradients]\label{lem:mollgrad}
Let $v\in \mathrm{L}^1_{\mathrm{loc}}(\R^d;V)$. Suppose that $v$ is $\mathrm{L}^1$-differentiable at $x$, namely
\[
 \fint_{B_r(x)}|v(y)-v(x)-\nabla_{\mathrm{ap}}v(x)(y-x)|\mathrm{d} y=o(r).
\]
Then
\begin{equation}\label{eq:mollgrad}
 \nabla(v*\varrho_\delta)(x)\longrightarrow \nabla_{\mathrm{ap}}v(x)
 \qquad\text{as }\delta\downarrow0.
\end{equation}
\end{lemma}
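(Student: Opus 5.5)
The plan is to write the mollified gradient as a convolution against $\nabla\varrho_\delta$ and subtract the affine part at $x$. Set $L:=\nabla_{\ap}v(x)$ and define the remainder
\[
 w(y):=v(y)-v(x)-L(y-x),
\]
so that, by hypothesis, $\fint_{B_r(x)}|w|\,dy=o(r)$. By the formula for $\nabla(v*\varrho_\delta)$ recorded just before the statement, and by linearity of convolution,
\[
 \nabla(v*\varrho_\delta)(x)=\nabla(w*\varrho_\delta)(x)+\nabla\bigl(v(x)*\varrho_\delta\bigr)(x)+\nabla\bigl(L(\cdot-x)*\varrho_\delta\bigr)(x).
\]

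The affine pieces are handled by elementary facts. A constant convolved with $\varrho_\delta$ is again the same constant, so the middle term vanishes. For the affine map $y\mapsto L(y-x)$, convolution with $\varrho_\delta$ returns the same map shifted by the constant $-L\int z\varrho_\delta(z)\,dz$, and hence its gradient is exactly $L$. This can be checked directly by differentiating under the integral, which is legitimate because $\varrho_\delta$ has compact support. It therefore suffices to show that $\nabla(w*\varrho_\delta)(x)\to0$.

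For the remainder we use that $\operatorname{spt}\nabla\varrho_\delta\subset B_\delta$ and $\|\nabla\varrho_\delta\|_{L^\infty}=\delta^{-d-1}\|\nabla\varrho\|_{L^\infty}$. These give
\[
 |\nabla(w*\varrho_\delta)(x)|\le \int_{B_\delta(x)}|w(y)|\,|\nabla\varrho_\delta(x-y)|\,dy\le \|\nabla\varrho\|_{L^\infty}\,\omega_d\,\delta^{-1}\fint_{B_\delta(x)}|w|\,dy,
\]
and the right-hand side equals $\delta^{-1}o(\delta)=o(1)$ as $\delta\downarrow0$, which gives \eqref{eq:mollgrad}.

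There is no real obstacle in this argument. The only point needing care is the scaling: the derivative costs one factor $\delta^{-1}$, and this is exactly compensated by the $o(r)$ rate in the $L^1$-differentiability hypothesis. Together with the identity $\nabla(\text{affine}*\varrho_\delta)=L$, the proof is complete.
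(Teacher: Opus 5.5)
Your proof is correct and follows essentially the same route as the paper. Both arguments subtract the first-order expansion at $x$. The constant and linear parts then contribute $0$ and $\nabla_{\mathrm{ap}}v(x)$ respectively; the paper gets this from $\int\nabla\varrho_\delta=0$ and an integration by parts, while you convolve the affine map directly. Finally, the remainder is bounded by $C_\varrho\,\delta^{-d-1}\int_{B_\delta(x)}|R|\,dy=o(1)$.
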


\begin{proof}
Set
\[
 R(y):=v(y)-v(x)-\nabla_{\mathrm{ap}}v(x)(y-x).
\]
Then
\begin{align*}
\nabla(v*\varrho_\delta)(x)
&=
\int_{\R^d} v(y)\otimes \nabla\varrho_\delta(x-y)\,dy
\\
&=
\int_{\R^d}
\Bigl(
v(x)+\nabla_{\rm ap}v(x)(y-x)+R(y)
\Bigr)
\otimes \nabla\varrho_\delta(x-y)\,dy
\\
&=
v(x)\otimes
\int_{\R^d}\nabla\varrho_\delta(x-y)\,dy
\\
&\quad
+
\nabla_{\rm ap}v(x)
\int_{\R^d}
(y-x)\otimes\nabla\varrho_\delta(x-y)\,dy
\\
&\quad
+
\int_{\R^d}
R(y)\otimes\nabla\varrho_\delta(x-y)\,dy
\\
&=
\nabla_{\rm ap}v(x)
+
\int_{B_\delta(x)}
R(y)\otimes\nabla\varrho_\delta(x-y)\,dy.
\end{align*}
The last equality follows from integration by parts and the properties of the mollifier. Therefore
% \[
%  \nabla(v*\varrho_\delta)(x)-\nabla_{\mathrm{ap}}v(x)
%  =\int_{B_\delta(x)}R(y)\otimes\nabla\varrho_\delta(x-y)\mathrm{d} y.
% \]

\[
 |\nabla(v*\varrho_\delta)(x)-\nabla_{\mathrm{ap}}v(x)|
 \le C_\varrho\delta^{-d-1}
 \int_{B_\delta(x)}|R(y)|\,dy
 =o(1).
\]
\end{proof}

% \subsection{A local-to-global mollification criterion for $\BV$}

% For $u\in \mathrm{L}^1_{\mathrm{loc}}(\Omega;V)$ set $u_\eps=u*\varrho_\eps$ on
% \[
%  \Omega_\eps:=\{x\in\Omega:\dist(x,\partial\Omega)>\eps\}.
% \]

% \begin{lemma}[Mollification criterion]\label{lem:BVcriterion}
% Let $u\in \mathrm{L}^1_{\mathrm{loc}}(\Omega;V)$. Assume that there exists $M<\infty$ such that for every $\Omega'\Subset\Omega$,
% \begin{equation}\label{eq:BVcriterion}
%  \limsup_{\eps\downarrow0}\int_{\Omega'}|\nabla u_\eps|\,dx\le M.
% \end{equation}
% Then $u\in \BV(\Omega;V)$ and $|Du|(\Omega)\le M$.
% \end{lemma}

% \begin{proof}
% Let $\Phi\in C_c^1(\Omega;V\otimes\R^d)$ with $\|\Phi\|_{L^\infty}\le1$, and choose $\Omega'\Subset\Omega$ containing $\supp\Phi$. For small $\eps$,
% \[
%  \int_\Omega u_\eps\cdot\operatorname{div}\Phi\,dx
%  =-\int_\Omega \nabla u_\eps:\Phi\,dx.
% \]
% Since $u_\eps\to u$ in $\mathrm{L}^1(\Omega')$,
% \[
%  \left|\int_\Omega u\cdot\operatorname{div}\Phi\,dx\right|
%  \le \limsup_{\eps\downarrow0}\int_{\Omega'}|\nabla u_\eps|\,dx
%  \le M.
% \]
% Taking the supremum over all such $\Phi$ gives the total variation of the distributional gradient.
% \end{proof}
\subsection{$\mathrm{L}^{1,\infty}$-control on the approximate gradient}\label{sbsct:weakL1}
For a measurable function $f$ defined on an open set $U\subset \R^d$, we recall that the weak-$\mathrm{L}^1$ quasi-norm is defined by:
\begin{equation}\label{eq:weaknorm}
 \|f\|_{\mathrm{L}^{1,\infty}(U)}
 :=\sup_{s>0}s\,\Ld\bigl(\{x\in U:|f(x)|>s\}\bigr).
\end{equation}
In particular, it is immediate that
\begin{equation}\label{eq:L1toWeak}
 \|f\|_{\mathrm{L}^{1,\infty}(U)}\le \|f\|_{\mathrm{L}^1(U)}.
\end{equation}

The next proposition is a standard consequence of the potential representation for elliptic operators and the weak $(1,1)$ boundedness of Calderón--Zygmund operators. See also \cite{ambrosio1997fine} for the $\BD$ case.

\begin{proposition}[Global weak endpoint estimate]
\label{prop:globalweak}
Assume that $\A$ is elliptic. Then there exists $C_{d,\A}>0$
such that every compactly supported
$v\in BV^{\A}(\R^d)$ satisfies
\begin{equation}\label{eq:globalweak}
    \|\nabla_{\ap}v\|_{L^{1,\infty}(\R^d)}
    \le C_{d,\A}|\A v|(\R^d).
\end{equation}
\end{proposition}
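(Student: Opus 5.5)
The plan is to represent $\nabla v$ as a zero-order singular integral of the measure $\A v$, and then read off the approximate gradient from the absolutely continuous part.

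\emph{Step 1: representation formula.} By $\R$-ellipticity (\cref{rmk:CimpliesR}), the symbol $\aA[\xi]$ is injective for $\xi\in\R^d\setminus\{0\}$. Hence $\aA[\xi]^*\aA[\xi]$ is invertible, and we may define the left inverse
\[
L(\xi):=(\aA[\xi]^*\aA[\xi])^{-1}\aA[\xi]^*,
\]
which is smooth on the sphere and homogeneous of degree $-1$. For $v\in C_c^\infty(\R^d;V)$ we have $\widehat{\A v}(\xi)=i\aA[\xi]\hat v(\xi)$. Therefore
\[
\widehat{\partial_j v}(\xi)=\xi_j L(\xi)\aA[\xi]\hat v(\xi)=m_j(\xi)\widehat{\A v}(\xi),
\qquad m_j(\xi):=-i\,\xi_jL(\xi).
\]
Each $m_j$ is smooth away from the origin and homogeneous of degree $0$. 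It follows that $\nabla v=T_m(\A v)$, where $T_m$ is a Calderón--Zygmund operator: a constant multiple of the identity plus a principal-value convolution with a smooth kernel $K$ homogeneous of degree $-d$ with zero mean on spheres.

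\emph{Step 2: extension to measures.} For compactly supported $v\in\BV^\A(\R^d)$, mollify to get $v_\delta=v*\varrho_\delta\in C_c^\infty$, so that $\A v_\delta=(\A v)*\varrho_\delta$ and $\|\A v_\delta\|_{L^1}\le|\A v|(\R^d)$. The classical weak-$(1,1)$ estimate then gives
\[
\|\nabla v_\delta\|_{L^{1,\infty}}\le C|\A v|(\R^d)
\]
uniformly in $\delta$.

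\emph{Step 3: passage to the limit.} By \cref{thm:criticaldiff}, $v$ is $L^1$-differentiable a.e. By \cref{lem:mollgrad}, $\nabla v_\delta(x)\to\nabla_{\ap}v(x)$ at every such point, so the convergence holds pointwise a.e. Fatou's lemma applied to level sets, namely
\[
\Ld\bigl(\{|\nabla_{\ap}v|>s\}\bigr)\le\liminf_{\delta\downarrow0}\Ld\bigl(\{|\nabla v_\delta|>s'\}\bigr)\qquad\text{for } s'<s,
\]
yields \eqref{eq:globalweak} after letting $s'\uparrow s$. Note that Step 3 only uses $\R$-ellipticity for the differentiability, which is what \cref{thm:criticaldiff} requires, and Step 2 only needs the multiplier identity.

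\emph{Main obstacle.} The delicate point is justifying the multiplier identity and the weak-$(1,1)$ bound uniformly. This needs the standard fact that a smooth zero-homogeneous multiplier decomposes as $c\,\mathrm{Id}+\mathrm{p.v.}\,K*$ with $K$ satisfying Hörmander's condition. It also needs the weak-type inequality to be applied to the smooth $L^1$ functions $\A v_\delta$, rather than directly to measures. The mollification route avoids defining $T_m$ on measures, so I would rely on it. The remaining care is to check that $\nabla v_\delta=T_m(\A v_\delta)$ holds pointwise, not just in $L^2$. This follows since both sides lie in $L^2$ and agree in Fourier, and $\nabla v_\delta$ is continuous.
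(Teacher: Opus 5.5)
Your proof is correct, but it reaches the estimate by a different route than the paper.

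The paper works with the measure directly. It writes $v=K_{\A}*\A v$ via the potential representation of \cite{GR2019diff}. It then uses \cite[Theorem~3.4]{alberti2014p} to identify $\nabla_{\ap}v$ a.e.\ with the principal-value singular integral of the measure $\A v$ against $\nabla K_{\A}$, plus a bounded linear image of $\frac{d\A v}{d\Ld}$. Finally, it applies the weak-$(1,1)$ bound for singular integrals of finite measures to the first piece and a trivial $L^1$ bound to the second.

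You instead apply the Calder\'on--Zygmund weak-$(1,1)$ estimate only to the smooth data $\A v_\delta=(\A v)*\varrho_\delta$, which gives a bound uniform in $\delta$. You then transfer it to $\nabla_{\ap}v$ using three ingredients: the mere existence of the approximate gradient (\cref{thm:criticaldiff}), the pointwise convergence in \cref{lem:mollgrad}, and the lower semicontinuity of the weak-$L^1$ quasi-norm under a.e.\ convergence (Fatou on superlevel sets along a sequence $\delta_k\downarrow0$).

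Your route buys economy. You need no singular integrals of measures, no a.e.\ existence of their principal values, and no structural formula for $\nabla_{\ap}v$. Everything beyond textbook multiplier theory is already in the paper; it even gives a use to \cref{lem:mollgrad}, which the paper states but never invokes. Note, though, that \cref{thm:criticaldiff} is still used as a black box, so the saving is local to this proposition. The paper's route yields more information: an explicit decomposition of $\nabla_{\ap}v$ into a singular-integral image of the whole measure $\A v$ and a pointwise image of its absolutely continuous part. That extra structure is not needed for the inequality itself.

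One harmless slip: with $\widehat{\partial_j v}=i\xi_j\hat v$ and $\widehat{\A v}=i\aA[\xi]\hat v$, the multiplier is $m_j(\xi)=\xi_jL(\xi)$, not $-i\xi_jL(\xi)$, because your first equality drops a factor $i$. This does not affect smoothness or $0$-homogeneity of the multiplier.
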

 \begin{proof}
By the potential representation for elliptic operators
\cite[Lemma~2.1]{GR2019diff}, one can write
$v=K_{\A}*\A v$, where $K_{\A}$ is smooth away from the origin
and $(1-d)$-homogeneous. By \cite[Section~3.3, Theorem~3.4]{alberti2014p},
$\nabla_{\ap}v$ is the sum of the singular integral associated with
$\nabla K_{\A}$ and a bounded linear image of
$\frac{d\A v}{d\Ld}$. The former satisfies the classical weak-$(1,1)$
estimate, while the latter belongs to $L^1$ with norm bounded by
$C_{\A}|\A v|(\R^d)$. Hence
\[
\|\nabla_{\ap}v\|_{L^{1,\infty}(\R^d)}
\le C_{d,\A}|\A v|(\R^d).
\]
 \end{proof}

\section{Weak Korn-Poincar\'e estimate and control of the null-space gradient}

We now combine Poincar\'e with \cref{prop:globalweak}. The key point is that the weak estimate is obtained with the same null-space polynomial selected by the Poincar\'e projection.

\begin{proposition}[Local weak Korn--Poincar\'e estimate] \label{prop:local}
Assume that $\A$ is $\C$-elliptic. Let $u\in \BV^{\A}(B_r(x))$. Set
\[
 a_{r,x}:=\Pi_{B_r(x)} u\in\mathrm{Ker}(\A).
\]
Then
\begin{equation}\label{eq:localPoincare}
 \int_{B_r(x)}|u- a_{r,x}|\,dx
 \le C_{d,\A}r|\A u|(B_r(x)),
\end{equation}
and
\begin{equation}\label{eq:localweak}
 \|\nabla_{\ap}u-\nabla  a_{r,x}\|_{\mathrm{L}^{1,\infty}(B_{r/2}(x))}
 \le C_{d,\A}|\A u|(B_r(x)).
\end{equation}
\end{proposition}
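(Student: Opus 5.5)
The first estimate \eqref{eq:localPoincare} is \cref{thm:PoincareA} applied on $B_r(x)$ with $a_{r,x}=\Pi_{B_r(x)}u$, so nothing needs to be done there. For \eqref{eq:localweak} the plan is to localize $u-a_{r,x}$ with a cut-off, apply the global weak estimate \cref{prop:globalweak} to the localized map, and then check that on $B_{r/2}(x)$ its approximate gradient coincides with $\nabla_{\ap}u-\nabla a_{r,x}$.

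\textbf{Step 1: the cut-off.} Fix $\eta\in C^1_c(B_r(x))$ with $0\le\eta\le1$, $\eta\equiv1$ on $B_{r/2}(x)$ and $\|\nabla\eta\|_{L^\infty}\le C_d/r$. Set $v:=\eta\,(u-a_{r,x})$, extended by zero outside $B_r(x)$.

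\textbf{Step 2: $v$ is admissible for \cref{prop:globalweak}.} By \cref{thm:FDN}, $a_{r,x}$ is a polynomial, hence smooth, and $\A a_{r,x}=0$. Therefore $u-a_{r,x}\in\BV^\A(B_r(x))$ with $\A(u-a_{r,x})=\A u$. \cref{lem:product} then gives that $v\in\BV^\A(\R^d)$ is compactly supported, and
\[
|\A v|(\R^d)\le |\A u|(B_r(x))+\frac{C_{d,\A}}{r}\|u-a_{r,x}\|_{L^1(B_r(x))}\le C_{d,\A}|\A u|(B_r(x)).
\]
The last inequality is exactly where the scaling factor $r$ in \eqref{eq:localPoincare} cancels against $\|\nabla\eta\|_\infty\sim 1/r$. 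This is the reason the weak estimate must be taken with the \emph{same} polynomial $a_{r,x}$ selected by the Poincaré projection.

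\textbf{Step 3: apply the global estimate.} \cref{prop:globalweak} yields
\[
\|\nabla_{\ap}v\|_{L^{1,\infty}(\R^d)}\le C_{d,\A}|\A u|(B_r(x)).
\]

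\textbf{Step 4: identify the approximate gradient.} On the open ball $B_{r/2}(x)$ we have $v=u-a_{r,x}$ a.e. At every point $y\in B_{r/2}(x)$ where $u$ is $L^1$-differentiable (a.e. $y$, by \cref{thm:criticaldiff}), small balls around $y$ stay inside $B_{r/2}(x)$. Since $a_{r,x}$ is smooth, this gives
\[
\nabla_{\ap}v(y)=\nabla_{\ap}u(y)-\nabla a_{r,x}(y).
\]
Restricting the weak quasi-norm to $B_{r/2}(x)$, which is monotone in the domain by \eqref{eq:weaknorm}, then gives \eqref{eq:localweak}.

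\textbf{Main obstacle.} All the steps above are routine; the main difficulty lies in \cref{prop:globalweak}. If it were not available, one would have to justify $\nabla_{\ap}v=\nabla K_\A*\A v$ (in the principal value sense, plus a multiple of the absolutely continuous density) for compactly supported $v$, and invoke Calderón--Zygmund weak-$(1,1)$ bounds for measures. Since the paper provides \cref{prop:globalweak}, I would simply invoke it, and be careful only in Step 2 that $v\in L^1$ with compact support, so the proposition applies.
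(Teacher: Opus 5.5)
Your proposal is correct and follows the paper's proof essentially step by step: the same cut-off $\eta$, the Leibniz rule of \cref{lem:product} combined with \eqref{eq:localPoincare} to get $|\A v|(\R^d)\le C_{d,\A}|\A u|(B_r(x))$ for your $v$ (the paper's $w$), and then \cref{prop:globalweak} followed by restriction to $B_{r/2}(x)$. Your Step 4 spells out the a.e.\ identification $\nabla_{\ap}v=\nabla_{\ap}u-\nabla a_{r,x}$ on $B_{r/2}(x)$, which the paper states without comment.
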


\begin{proof}
Estimate \eqref{eq:localPoincare} follows directly from \cref{thm:PoincareA}. Now choose a cutoff function $\eta\in C_c^\infty(B_r(x))$ such that
\[
 0\le\eta\le1,
 \qquad
 \eta\equiv1\text{ on }B_{r/2}(x),
 \qquad
 |\nabla\eta|\le C_dr^{-1}.
\]
Set
\begin{equation}
 w:=\ca_{B_r(x)}\eta(u-a_{r,x}) \in \mathrm{BV}^{\mathcal{A}}(\R^d).
\end{equation}
Since $a_{r,x}\in\mathrm{Ker}(\A)$, one has $\A a_{r,x}=0$. By \cref{lem:product} and \eqref{eq:localPoincare},
\begin{align}
 |\A w|(\R^d)
 &\le |\A u|(B_r(x))
 +C_{\A}r^{-1}\int_{B_r(x)}|u-a_{r,x}|\,dx \le C_{d,\A}|\A u|(B_r(x)).
 \label{eq:Awlocal}
\end{align}
Applying \cref{prop:globalweak} to $w$ gives
\[
 \|\nabla_{\ap}w\|_{\mathrm{L}^{1,\infty}(\R^d)}
 \le C_{d,\A}|\A u|(B_r(x)).
\]
On $B_{r/2}(x)$, $w=u-a_{r,x}$ and hence
\[
 \nabla_{\ap}w=\nabla_{\ap}u-\nabla a_{r,x}
 \qquad\text{a.e.}
\]
Restricting the weak estimate proves \eqref{eq:localweak}.
\end{proof}

Unlike the $\BV$ or $\BD$ case, $a_{r,x}$ need not be affine; however, by \cref{thm:FDN}, it is a polynomial of degree bounded solely in terms of $\A$, and thus we can recover an $L^1$-type control on the rotational part from the $L^{1,\infty}$ control given by \cref{prop:local}.

\begin{lemma}\label{lem:finitedim}
Assume that $\A$ is $\C$-elliptic. There exists $C_{d,\A}>0$ such that for every ball $B_r(x_0)$ and every $a\in\mathrm{Ker}(\A)$,
\begin{equation}\label{eq:finitedim}
 \int_{B_r(x_0)}|\nabla a|\,dx
 \le C_{d,\A}\|\nabla a\|_{\mathrm{L}^{1,\infty}(B_r(x_0))}.
\end{equation}
The constant is independent of $x_0$ and $r$.
\end{lemma}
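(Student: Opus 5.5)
The plan is a scaling reduction to the unit ball, followed by a finite-dimensional compactness argument. By \cref{thm:FDN}, $\mathrm{Ker}(\A)$ is a finite-dimensional space of $V$-valued polynomials of degree at most $\ell$. Hence $\mathcal G:=\{\nabla a: a\in\mathrm{Ker}(\A)\}$ is a finite-dimensional space of $V\otimes\R^d$-valued polynomials of degree at most $\ell-1$.

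\emph{Step 1: invariance under translations and dilations.} If $a\in\mathrm{Ker}(\A)$, then $a(x_0+r\,\cdot)\in\mathrm{Ker}(\A)$, because $\A$ is homogeneous with constant coefficients. Setting $\tilde a(y):=a(x_0+ry)$, we have $\nabla\tilde a(y)=r\nabla a(x_0+ry)$. Changing variables, both sides of \eqref{eq:finitedim} scale in the same way:
\[
\int_{B_r(x_0)}|\nabla a|\,dx=r^{d-1}\int_{B_1}|\nabla\tilde a|\,dy.
\]
For the weak quasi-norm we use $\Ld(\{x\in B_r(x_0):|\nabla a|>s\})=r^d\Ld(\{y\in B_1:|\nabla\tilde a|>rs\})$, which gives
\[
\|\nabla a\|_{\mathrm{L}^{1,\infty}(B_r(x_0))}=r^{d-1}\|\nabla\tilde a\|_{\mathrm{L}^{1,\infty}(B_1)}.
\]
It therefore suffices to prove \eqref{eq:finitedim} on $B_1$, with a constant depending only on $\mathcal G$, that is, on $d$ and $\A$.

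\emph{Step 2: comparison of the two functionals on $\mathcal G$.} The map $P\mapsto\|P\|_{\mathrm{L}^1(B_1)}$ is a norm on $\mathcal G$: a polynomial vanishing a.e.\ on $B_1$ is zero. The map $P\mapsto\|P\|_{\mathrm{L}^{1,\infty}(B_1)}$ is only a quasi-norm, so the usual equivalence of norms cannot be quoted directly. I would argue by homogeneity and compactness instead. Both functionals are positively $1$-homogeneous, so it suffices to show that $\|P\|_{\mathrm{L}^{1,\infty}(B_1)}\ge c>0$ on the compact set $S:=\{P\in\mathcal G:\|P\|_{\mathrm{L}^1(B_1)}=1\}$.

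\emph{Step 3: the uniform lower bound on $S$.} This is the main step. Since $\mathcal G$ is finite-dimensional, all norms on it are equivalent, so $\sup_{P\in S}\|P\|_{\mathrm{L}^\infty(B_1)}\le M$ for some $M=M(d,\A)$. For $P\in S$, take $s=1/(2|B_1|)$. The part of $\int_{B_1}|P|$ coming from $\{|P|\le s\}$ is at most $1/2$, so
\[
\tfrac12\le\int_{\{|P|>s\}}|P|\le M\,\Ld(\{|P|>s\}).
\]
Hence $\|P\|_{\mathrm{L}^{1,\infty}(B_1)}\ge s\,\Ld(\{|P|>s\})\ge s/(2M)=:c$.

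This argument avoids any continuity issue for the weak quasi-norm and yields \eqref{eq:finitedim} with $C_{d,\A}=1/c=4M|B_1|$. The only genuine input is finite-dimensionality, through the equivalence of the $\mathrm{L}^1$ and $\mathrm{L}^\infty$ norms on the space $\mathcal G$ of polynomials of bounded degree. That is exactly where $\C$-ellipticity, via \cref{thm:FDN}, enters.
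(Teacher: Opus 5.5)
Your proof is correct and has the same skeleton as the paper's. You reduce to $B_1$ by translation and dilation; your check that both sides scale like $r^{d-1}$, which the paper only asserts, is right. You then use the finite-dimensionality of $\mathcal G=\{\nabla a: a\in\mathrm{Ker}(\A)\}$ coming from \cref{thm:FDN}.

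The difference lies in the last step. The paper concludes by quoting that all quasi-norms on a finite-dimensional space are equivalent. You deliberately avoid that statement and prove the needed one-sided bound directly. Your argument uses only the equivalence of the genuine norms $\|\cdot\|_{\mathrm{L}^1(B_1)}$ and $\|\cdot\|_{\mathrm{L}^\infty(B_1)}$ on $\mathcal G$, plus a Chebyshev-type splitting at the level $s=1/(2\Ld(B_1))$.

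Your route is self-contained and gives an explicit constant $C_{d,\A}=4M\Ld(B_1)$, where $M$ is the $\mathrm{L}^1$--$\mathrm{L}^\infty$ equivalence constant on $\mathcal G$. It also addresses head-on the point the paper passes over quickly, namely that the weak-$\mathrm{L}^1$ functional is only a quasi-norm. The paper's route is shorter and yields two-sided equivalence, but the reverse inequality is anyway the trivial \eqref{eq:L1toWeak}.

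A minor remark: the compactness of $S$ announced in your Step 2 is never used in Step 3, so you can drop it.
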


\begin{proof}
By translation and scaling it is enough to prove the estimate on the unit ball. Indeed, if
\(
    \widetilde a(y):=a(x_0+ry),
\)
then $\widetilde a\in\mathrm{Ker}(\A)$ (since $\A$ is a first-order homogeneous differential operator with constant coefficients) and
\(
    \nabla\widetilde a(y)=r\nabla a(x_0+ry).
\)
By \cref{thm:FDN}, the space
\[
 X:=\{\nabla a|_{B} :a\in\mathrm{Ker}(\A)\}
\]
is finite-dimensional and consists of polynomial tensor fields. Since $\|\cdot \|_{\mathrm{L}^{1,\infty}(B)}$ is a quasi-norm and $\|\cdot\|_{\mathrm{L}^1(B)}$ is a norm (hence, in particular, a quasi-norm), and since all quasi-norms are equivalent on finite-dimensional spaces, we conclude. 
% We claim that on $X$ the $\mathrm{L}^1(B_1)$ norm is controlled by the weak-$\mathrm{L}^1(B_1)$ quasi-norm. Suppose not. Then there exists $g_k\in X$ such that
% \[
%  \|g_k\|_{\mathrm{L}^1(B_1)}=1,
%  \qquad
%  \|g_k\|_{\mathrm{L}^{1,\infty}(B_1)}\to0.
% \]
% Since $X$ is finite-dimensional, after passing to a subsequence $g_k$ converges uniformly on $B_1$ to some $g\in X$. In particular, $\|g\|_{\mathrm{L}^1(B_1)}=1$. Fix $t>0$. Uniform convergence implies that for $k$ large,
% \[
%  \{|g|>2t\}\subset\{|g_k|>t\}.
% \]
% Hence
% \[
%  t\,\Ld(\{|g|>2t\})
%  \le \|g_k\|_{\mathrm{L}^{1,\infty}(B_1)}\to0.
% \]
% Thus $\Ld(\{|g|>2t\})=0$ for every $t>0$, so $g=0$ almost everywhere, a contradiction. This proves \eqref{eq:finitedim} on $B_1$, and scaling gives the general case.
\end{proof}

\begin{lemma}[$L^1$ control of the null-space gradient]\label{lem:anchor}
Under the assumptions of \cref{prop:local},
\begin{equation}\label{eq:anchor}
 \int_{B_{r/2}(x)}|\nabla a_{r,x}|\,dx
 \le C_{d,\A}\left(
 \int_{B_{r/2}(x)}|\nabla_{\ap}u|\,dx
 +|\A u|(B_r(x))
 \right).
\end{equation}
\end{lemma}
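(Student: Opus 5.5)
The plan is to chain \cref{lem:finitedim}, the quasi-triangle inequality for the weak-$\mathrm{L}^1$ quasi-norm, \cref{prop:local}, and \eqref{eq:L1toWeak}, all on the ball $B_{r/2}(x)$.

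First, I apply \cref{lem:finitedim} on $B_{r/2}(x)$ to $a=a_{r,x}\in\mathrm{Ker}(\A)$. The constant there does not depend on the center or radius, so this gives
\[
\int_{B_{r/2}(x)}|\nabla a_{r,x}|\,dx\le C_{d,\A}\|\nabla a_{r,x}\|_{\mathrm{L}^{1,\infty}(B_{r/2}(x))}.
\]

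Next, I write $\nabla a_{r,x}=\nabla_{\ap}u-(\nabla_{\ap}u-\nabla a_{r,x})$ a.e. on $B_{r/2}(x)$. The weak quasi-norm satisfies $\|f+g\|_{\mathrm{L}^{1,\infty}}\le 2\bigl(\|f\|_{\mathrm{L}^{1,\infty}}+\|g\|_{\mathrm{L}^{1,\infty}}\bigr)$, because $\{|f+g|>s\}\subset\{|f|>s/2\}\cup\{|g|>s/2\}$. Hence
\[
\|\nabla a_{r,x}\|_{\mathrm{L}^{1,\infty}(B_{r/2}(x))}\le 2\|\nabla_{\ap}u\|_{\mathrm{L}^{1,\infty}(B_{r/2}(x))}+2\|\nabla_{\ap}u-\nabla a_{r,x}\|_{\mathrm{L}^{1,\infty}(B_{r/2}(x))}.
\]
The second term is bounded by $C_{d,\A}|\A u|(B_r(x))$ by \eqref{eq:localweak}. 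The first term is bounded by $\int_{B_{r/2}(x)}|\nabla_{\ap}u|\,dx$ by \eqref{eq:L1toWeak}. Combining these gives \eqref{eq:anchor}.

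There is no real obstacle; the only point needing care is bookkeeping of the domains. The weak estimate in \cref{prop:local} holds only on $B_{r/2}(x)$, which is why \cref{lem:finitedim} must be applied on the half ball rather than on $B_r(x)$. Its scale invariance makes this harmless. If $\int_{B_{r/2}(x)}|\nabla_{\ap}u|=\infty$, the estimate is trivially true.
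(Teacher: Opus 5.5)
Your proposal is correct and follows the paper's proof step for step: you apply \cref{lem:finitedim} on $B_{r/2}(x)$, then the quasi-triangle inequality \eqref{eq:weaktriangle}, then bound $\nabla_{\ap}u$ through \eqref{eq:L1toWeak} and the difference $\nabla_{\ap}u-\nabla a_{r,x}$ through \eqref{eq:localweak}. Your bookkeeping is also the paper's: the scale-invariant \cref{lem:finitedim} is used on the half ball precisely because that is where the weak estimate \eqref{eq:localweak} holds.
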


\begin{proof}
Observe that, for two arbitrary measurable functions $f,g$, we trivially have
\[
 \{|f+g|>s\}\subset\{|f|>s/2\}\cup\{|g|>s/2\}.
\]
Thus
\begin{equation}\label{eq:weaktriangle}
 \|f+g\|_{\mathrm{L}^{1,\infty}(U)}
 \le2\bigl(\|f\|_{\mathrm{L}^{1,\infty}(U)}+\|g\|_{\mathrm{L}^{1,\infty}(U)}\bigr),
\end{equation}
Using \cref{lem:finitedim} on $B_{r/2}(x)$, then \eqref{eq:weaktriangle}, \eqref{eq:L1toWeak}, and \eqref{eq:localweak}, we obtain
\begin{align*}
 \int_{B_{r/2}(x)}|\nabla a_{r,x}|\,dx
 &\le C_{d,\A}\|\nabla a_{r,x}\|_{\mathrm{L}^{1,\infty}(B_{r/2}(x))}\\
 &\le C_{d,\A}\left(
 \|\nabla_{\ap}u\|_{\mathrm{L}^{1,\infty}(B_{r/2}(x))}
 +\|\nabla_{\ap}u-\nabla a_{r,x}\|_{\mathrm{L}^{1,\infty}(B_{r/2}(x))}
 \right)\\
 &\le C_{d,\A}\left(
 \int_{B_{r/2}(x)}|\nabla_{\ap}u|\,dx
 +|\A u|(B_r(x))
 \right).
\end{align*}
\end{proof}
\section{Proof of the main theorem}
\begin{proof}[Proof of \cref{thm:main}]

Fix $\Omega'\subset\subset\Omega$ and set
\[
 \delta:=\dist(\Omega',\partial\Omega)>0.
\]
Let $\eps>0$ be small and consider the standard grid of pairwise disjoint cubes $Q_{4\eps}(x_Q)$. Let $\mathcal Q_\eps(\Omega')$ be the family of grid cubes meeting $\Omega'$. For each $Q_{4\eps}(x_Q)\in\mathcal Q_\eps(\Omega')$ consider the concentric balls $ B_{6\eps\sqrt d\, }(x_Q)\supset B_{3\eps\sqrt d\, }(x_Q)\supset Q_{6\eps}(x_Q)$ . Note that for $\eps<\eps_0(\delta)$ small enough we can guarantee that $B_{6\eps\sqrt d\,}(x_Q)\subset\subset \Omega$ for every $Q_{4\eps}(x_Q)\in\mathcal Q_\eps(\Omega')$.\\

Now, for each $Q_{4\eps}(x_Q)$, apply \cref{prop:local} on $B_{6\eps\sqrt d\,}(x_Q)$. Thus, setting
\[
 a_{\eps,x_Q}:=\Pi_{B_{6\eps\sqrt d}(x_Q)}u\in\mathrm{Ker}(\A),
\]
we have
\begin{equation}\label{eq:localPoincareQ}
 \int_{B_{6\eps\sqrt d}(x_Q)}
 |u-a_{\eps,x_Q}|\,dx
 \le
 C_{d,\A}\,\eps\,
 |\A u|\bigl(B_{6\eps\sqrt d}(x_Q)\bigr),
\end{equation}
and
\begin{equation}\label{eq:localweakQ}
 \|\nabla_{\ap}u-\nabla a_{\eps,x_Q}\|_
 {\mathrm{L}^{1,\infty}(B_{3\eps\sqrt d}(x_Q))}
 \le
 C_{d,\A}\,
 |\A u|\bigl(B_{6\eps\sqrt d}(x_Q)\bigr).
\end{equation}
Take a mollifier $\{\varrho_{\eps}\}_{\eps>0}$ as in Subsection \ref{sbsct:Molly}. For $x\in\Omega_\eps:=\{x\in\Omega:\dist(x,\partial\Omega)>\eps\}$, set
\[
u_\eps(x):=\int_{B_\eps}u(x-z)\varrho_\eps(z)\,dz.
\]
On $Q_{4\eps}(x_Q)$ we decompose
\begin{equation}\label{eq:griddecomp}
 \nabla u_\eps
 =\nabla\bigl((u-a_{\eps,x_Q})*\varrho_\eps\bigr)
 +\nabla(a_{\eps,x_Q}*\varrho_\eps).
\end{equation}
For the oscillation term, Fubini's theorem gives
\begin{align}
 \int_{Q_{4\eps}(x_Q)}\left|\nabla\bigl((u-a_{ \eps,x_Q})*\varrho_\eps\bigr)(x)\right|\,dx
 &\le \int_{B_\eps}|\nabla\varrho_\eps(z)|
 \int_{Q_{4\eps}(x_Q)}|u(x-z)-a_{ \eps,x_Q}(x-z)|\,dx\,dz\notag\\
 &\le \|\nabla\varrho_\eps\|_{\mathrm{L}^1}
 \int_{Q_{6\eps}(x_Q)}|u-a_{\eps,x_Q}|\,dx\notag\\
 &\le \frac{C_\varrho}{\eps}
 \int_{B_{6\eps\sqrt d\,}(x_Q)}|u-a_{\eps,x_Q}|\,dx\notag\\
 &\le C_{d,\A}|\A u|(B_{6\eps\sqrt d\,}(x_Q)).
 \label{eq:gridosc}
\end{align}
The second inequality follows from the fact that, since $\supp\varrho_\eps\subset B_\eps$, for every $x\in Q_{4\eps}(x_Q)$ and $z\in\supp\varrho_\eps$ one has $x-z\in Q_{6\eps}(x_Q) $. In the last inequality, we applied \eqref{eq:localPoincareQ}.\\

For the null-space term we now use that $a_{\eps,x_Q}$ is smooth. Hence
\[
 \nabla(a_{\eps,x_Q}*\varrho_\eps)=(\nabla a_{\eps,x_Q})*\varrho_\eps,
\]
and, again by Fubini,
\begin{align}
 \int_{Q_{4\eps}(x_Q)}|\nabla(a_{\eps,x_Q}*\varrho_\eps)(x)|\,dx
 &\le \int_{B_\eps}\varrho_\eps(z)
 \int_{Q_{4\eps}(x_Q)}|\nabla a_{\eps,x_Q}(x-z)|\,dx\,dz\notag\\
 &\le \int_{Q_{6\eps}(x_Q)}|\nabla a_{\eps,x_Q}|\,dx
 \le \int_{B_{3\eps\sqrt d\,}(x_Q)}|\nabla a_{\eps,x_Q}|\,dx.
 \label{eq:gridpoly1}
\end{align}

Applying \cref{lem:anchor} on $B_{6\eps\sqrt d\,}(x_Q)$ and using \eqref{eq:gridpoly1} gives
\begin{equation}\label{eq:gridpoly}
 \int_{Q_{4\eps}(x_Q)}|\nabla(a_{\eps ,x_Q}*\varrho_\eps)|\,dx
 \le C_{d,\A}\left(
 \int_{B_{3\eps\sqrt d\,}(x_Q)}|\nabla_{\ap}u|\,dx
 +|\A u|(B_{6\eps\sqrt d\,}(x_Q))
 \right).
\end{equation}
Combining \eqref{eq:griddecomp}, \eqref{eq:gridosc}, and \eqref{eq:gridpoly} yields
\begin{equation}\label{eq:onecubefinal}
 \int_{Q_{4\eps}(x_Q)}|\nabla u_\eps|\,dx
 \le C_{d,\A}\left(
 \int_{B_{3\eps\sqrt d\,}(x_Q)}|\nabla_{\ap}u|\,dx
 +|\A u|(B_{6\eps\sqrt d\,}(x_Q))
 \right).
\end{equation}

The family $\{B_{6\eps\sqrt d\,}(x_Q):Q_{4\eps}(x_Q)\in\mathcal Q_\eps(\Omega')\}$ has overlap bounded by a constant depending only on $d$. The same is true for the half-balls, while the cubes are essentially disjoint. Summing \eqref{eq:onecubefinal} over $Q_{4\eps}(x_Q)\in\mathcal Q_\eps(\Omega')$ yields
\begin{equation}\label{eq:gridglobal}
 \int_{\Omega'}|\nabla u_\eps|\,dx
 \le C_{d,\A}\left(
 \int_\Omega|\nabla_{\ap}u|\,dx
 +|\A u|(\Omega)
 \right)
\end{equation}
for every $\eps<\eps_0(\delta)$ small enough. \\

By the standard properties of mollifiers, $u_\eps\to u$ in $\mathrm{L}^1(\Omega';V)$. At the same time, the compactness theorem in $\BV(\Omega';V)$, together with
\eqref{eq:gridglobal}, yields, up to a subsequence,
$u_\eps\to v$ in $\mathrm{L}^1(\Omega';V)$ for some
$v\in\BV(\Omega';V)$. Hence $u=v$ and therefore
$u\in\BV(\Omega';V)$. Moreover, by lower semicontinuity,
\[
 |Du|(\Omega')
 \le C_{d,\A}\left(
 \int_\Omega|\nabla_{\ap}u|\,dx
 +|\A u|(\Omega)
 \right).
\]
Since $\Omega'\subset\subset\Omega$ is arbitrary and the right-hand side is independent of $\Omega'$, an exhaustion of $\Omega$ by relatively compact open subsets yields $u\in\BV(\Omega;V)$ and
\[
 |Du|(\Omega)
 \le C_{d,\A}\left(
 \int_\Omega|\nabla_{\ap}u|\,dx
 +|\A u|(\Omega)
 \right).
\]
\end{proof}

\bibliography{references}

\begin{thebibliography}{1}

\bibitem{alberti2014p}
Giovanni Alberti, Stefano Bianchini, and Gianluca Crippa.
\newblock On the $ l^p $-differentiability of certain classes of functions.
\newblock {\em Revista Matem{\'a}tica Iberoamericana}, 30(1):349--367, 2014.

\bibitem{ambrosio1997fine}
Luigi Ambrosio, Alessandra Coscia, and Gianni Dal~Maso.
\newblock Fine properties of functions with bounded deformation.
\newblock {\em Archive for Rational Mechanics and Analysis}, 139(3):201--238,
  1997.

\bibitem{ambrosio2000functions}
Luigi Ambrosio, Nicola Fusco, and Diego Pallara.
\newblock {\em Functions of bounded variation and free discontinuity problems},
  volume 254.
\newblock Clarendon Press Oxford, 2000.

\bibitem{breit2017traces}
Dominic Breit, Lars Diening, and Franz Gmeineder.
\newblock {On the trace operator for functions of bounded
  $\mathbb{A}$-variation}.
\newblock {\em Analysis e PDE}, 13(2):559 -- 594, 2020.

\bibitem{cagnetti2022korn}
Filippo Cagnetti, Antonin Chambolle, and Lucia Scardia.
\newblock Korn and poincar{\'e}-korn inequalities for functions with a small
  jump set.
\newblock {\em Mathematische Annalen}, 383(3--4):1179--1216, 2022.

\bibitem{conti2018special}
Sergio Conti, Matteo Focardi, and Flaviana Iurlano.
\newblock Which special functions of bounded deformation have bounded
  variation?
\newblock {\em Proceedings of the Royal Society of Edinburgh Section A:
  Mathematics}, 148(1):33--50, 2018.

\bibitem{friedrich2018piecewise}
Manuel Friedrich.
\newblock A piecewise korn inequality in sbd and applications to embedding and
  density results.
\newblock {\em SIAM Journal on Mathematical Analysis}, 50(4):3842--3918, 2018.

\bibitem{GR2019diff}
Franz Gmeineder and Bogdan Rai{\c{t}}{\u{a}}.
\newblock On critical $l^p$-differentiability of $bd$-maps.
\newblock {\em Revista Matem{\'a}tica Iberoamericana}, 35(7):2071--2078, 2019.

\bibitem{ornstein1962non}
Donald Ornstein.
\newblock A non-inequality for differential operators in the $l^1$ norm.
\newblock {\em Archive for Rational Mechanics and Analysis}, 11(1):40--49,
  1962.

\end{thebibliography}
\bibliographystyle{plain}

\end{document}